\documentclass[12pt,reqno]{amsart}
\usepackage{amssymb,amsmath,enumerate,hyperref}
\usepackage{mathrsfs}
\usepackage{color}

\usepackage{mathrsfs}
\newtheorem{theorem}{Theorem}[section]
\newtheorem{proposition}[theorem]{Proposition}
\newtheorem{lemma}[theorem]{Lemma}

\theoremstyle{definition}

\newtheorem{example}[theorem]{Example}

\newcommand{\clb}{\mathscr{B}}

\newcommand{\clh}{\mathcal{H}}
\newcommand{\clk}{\mathcal{K}}

\newcommand{\clm}{\mathcal{M}}

\newcommand{\clp}{\mathcal{P}}
\newcommand{\clq}{\mathcal{Q}}

\newcommand{\bd}{\mathbb{D}}

\newcommand{\bt}{\mathbb{T}}

\begin{document}
	\title[Hyponormal Toeplitz operators]{Hyponormal Toeplitz operators  with operator valued normal symbols}

\author[Bala]{Neeru Bala}
\address{Department of Mathematics and Computing, IIT(ISM) Dhanbad, Jharkhand, 826004, India}
\email{neerusingh41@gmail.com, neerubala@iitism.ac.in}

\author[Pratima Pandey]{Pratima Pandey}
\address{Department of Mathematics and Computing, IIT(ISM) Dhanbad, Jharkhand, 826004, India}
\email{23dr0283@iitism.ac.in }

\subjclass[2020]{47B35, 47B20}

\keywords{Hyponormal operators, Toeplitz Operators}

\begin{abstract}
	In this article, we first give a characterization of hyponormal Toeplitz operators with operator valued normal symbols. Latter, we examine when the class of hyponormal Toeplitz operators with operator valued symbols are analytic or normal.
\end{abstract}


\maketitle

	\section{Introduction}
	In this article, we denote a separable Hilbert space by $\clh$ and the space of all bounded linear operators on $\clh$ by $\clb(\clh)$. For $T\in\clb(\clh)$,  the null space and range space of $T$ are denoted by $\text{ ker}(T)$ and $\text{ran}(T)$, respectively. For a Hilbert space $\clh$, $L^{\infty}_{\clb(\clh)}(\bt)$, $H^{\infty}_{\clb(\clh)}(\bd)$ represent the Banach algebras of $\clb(\clh)$-valued essentially bounded Lebesgue measurable functions on $\bt$ and $\clb(\clh)$-valued bounded analytic functions on $\bd$, respectively. When $\clh=\mathbb{C}$, we write $L^{\infty}_{\clb(\clh)}(\bt)$ and $H^{\infty}_{\clb(\clh)} (\bd)$, by $L^{\infty}(\bt)$ and $H^{\infty}(\bd)$, respectively. For $\Phi\in L^{\infty}_{\clb(\clh)}(\bt)$, the {\it Toeplitz operator} $T_{\Phi}$ is defined by
	\begin{align*}
		T_{\Phi}f=P_+\Phi f\text{ for }f\in H^2_{\clh}(\bd),
	\end{align*}
	where $H^2_{\clh}(\bd)$ is the space of all $\clh$-valued analytic functions on $\bd$ and $P_+$ is the orthogonal projection from the space $L^2_{\clh}(\bt)$ of all $\clh$-valued Lebesgue square integrable functions on $\bt$ onto $H^2_{\clh}(\bd)$. When $\clh=\mathbb{C}$, we write $L^2_{\clh}(\bt)$ and $H^2_{\clh}(\bd)$ as $L^2(\bt)$ and $H^2(\bd)$, respectively. A Toeplitz operator $T_{\Phi}$ is called {\it analytic} if $\Phi\in H^{\infty}_{\clb(\clh)}(\bd).$
	
	Here our aim is to study hyponormal Toeplitz operators $T_{\Phi}$ for $\Phi\in L^{\infty}_{\clb(\clh)}(\bt)$. An operator $T\in\clb(\clh)$ is called {\it hyponormal}, if 
	\begin{align*}
		TT^*\leq T^*T,
	\end{align*}
	or the {\it self-commutator} $[T^*,T]=T^*T-TT^*\geq 0$.  The operator $T$ is called {\it subnormal} if there exists a Hilbert space $\clk\supseteq\clh$ and a normal operator $N\in\clb(\clk)$ such that $T=N|_{\clh}$. Note that the class of subnormal operators is contained in the class of hyponormal operators. One of the well knowm  questions posed by Halmos \cite{Halmos} related to Toeplitz operator is the following:
	\begin{center}
		"Is every subnormal Toeplitz operator $T_{\varphi}$ for $\varphi\in L^{\infty}(\bt)$ is normal or analytic?"\\
	\end{center}
	Cowen and Long \cite{Cowen Long} provided a negative answer to the above question, but it further motivated the study of Toeplitz operators which are normal or analytic. An interesting result was given by  Abrahamse \cite{Abrahamse1976} for bounded type functions. A function $\varphi\in L^{\infty}(\bt)$ is called {\it bounded type}, if $\varphi=\frac{\psi_1}{\psi_2}$ for some $\psi_1,\psi_2\in H^{\infty}(\bd)$.  In \cite{Abrahamse1976}, the author proved that if $T_{\varphi}$ is hyponormal, where $\varphi$ or $\bar{\varphi}$ is bounded type and $\text{ker}[T^*_{\varphi},T_{\varphi}]$ is invariant under $T_{\varphi}$, then $T_{\varphi}$ is normal or analytic. 
	
	For $\Phi\in L^{\infty}_{\clb(\clh)}(\bt)$, define
	\begin{align*}
		\Phi_+=P_+(\Phi)\text{ and }\Phi_-^*=(I-P_+)\Phi. 
	\end{align*} It is easy to see that $\Phi=\Phi_++\Phi^*_-.$ We say $\Phi$ is normal or hyponormal, if 
	\begin{align*}
		\Phi(z)^*\Phi(z)=\Phi(z)\Phi(z)^*\text{ or }	\Phi(z)^*\Phi(z)\geq\Phi(z)\Phi(z)^*\text{ for all }z\in\bd,
	\end{align*} respectively. The definition of bounded type functions is later extended by Curto et al. \cite{RE} to matrix valued functions in $L^{\infty}_{\clm_n}(\bt)$, where $\clm_n$ is the space of all $n\times n $ complex matrices. We say $\Phi=(\varphi_{ij})_{n\times n}\in L^{\infty}_{\clm_n}(\bt)$ is  bounded type, if every $\varphi_{ij}$ is bounded type for $1\leq i,j\leq n.$ Let $\Phi,\Psi\in H^{\infty}_{\clb(\clh)}(\bd)$. Then $\Phi$ and $\Psi$ are not {\it left coprime} if there exists a non-constant inner function $\Theta$ such that 
	$$\Phi(z)=\Theta(z)\Phi_1(z)\text{ and }\Psi(z)=\Theta(z)\Psi_1(z),$$
	for some $\Phi_1,\Psi_1\in H^{\infty}_{\clb(\clh)}(\bd)$. Curto et al. \cite{RE} extended the result of Abrahamse \cite{Abrahamse1976} to Toeplitz operators with matrix valued symbol and the following result.
	\begin{theorem}\cite{RE}
		Let $\Phi\in L^{\infty}_{\clm_n}(\bt)$ such that $\Phi$ and $\Phi^*$ are bounded type. Then
		$$\Phi_+=\Theta_1 A^*\text{ and }\Phi_-=\Theta_2 B^*,$$
		where $\Theta_i=\theta_i I_n$, $\theta_i\in H^{\infty}(\bd)$ and $A,B\in H^2_{\clm_n}(\bd)$. Assume that $A,B$ and $\Theta_2$ are left coprime. If
		\begin{enumerate}
			\item $T_{\Phi}$ is hyponormal.
			\item  $\text{ker}[T_{\Phi}^*,T_{\Phi}]$ is invariant under $T_{\Phi}$. 
		\end{enumerate}
		Then $T_{\Phi}$ is normal or analytic.
	\end{theorem}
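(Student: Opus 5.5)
The plan follows Abrahamse's scalar argument, using the matrix-valued analogue of Cowen's criterion and a kernel computation via Hankel operators. Write $\Phi=\Phi_-^*+\Phi_+$. The self-commutator of $T_\Phi$ can be written as
\begin{align*}
[T_\Phi^*,T_\Phi]=H_{\Phi_+^*}^*H_{\Phi_+^*}-H_{\Phi_-^*}^*H_{\Phi_-^*}+\bigl(T_{\Phi_+}^*T_{\Phi_-}-T_{\Phi_-}^*T_{\Phi_+}\bigr)\text{-type terms}.
\end{align*}
In the block setting it is cleaner to use the identity
\begin{align*}
[T_\Phi^*,T_\Phi]=H_{\Phi^*}^*H_{\Phi^*}-H_{\Phi}^*H_{\Phi}+T_{\Phi^*\Phi-\Phi\Phi^*}.
\end{align*}
Hyponormality of $T_\Phi$ first forces $\Phi$ to be normal a.e. (via the matrix Cowen theorem of Gu--Hendricks--Rutherford). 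This gives $[T_\Phi^*,T_\Phi]=H_{\Phi_+^*}^*H_{\Phi_+^*}-H_{\Phi_-^*}^*H_{\Phi_-^*}$. It also gives a contraction-type $K\in H^\infty_{\clm_n}$ with $\Phi_-^*-K\Phi_+^*\in H^\infty$, which is the symbol relation I will exploit.

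Next, use the bounded type factorizations $\Phi_+=\Theta_1A^*$ and $\Phi_-=\Theta_2B^*$. Because $\Theta_i=\theta_iI_n$ is scalar inner, the kernels satisfy $\ker H_{\Phi_-^*}=\Theta_2H^2_{\mathbb C^n}$ under the left-coprime hypothesis on $B,\Theta_2$, and $\ker H_{\Phi_+^*}\supseteq\Theta_1H^2_{\mathbb C^n}$. Since $[T_\Phi^*,T_\Phi]\ge0$, I plan to show that $\ker[T_\Phi^*,T_\Phi]=\ker H_{\Phi_+^*}^*H_{\Phi_+^*}\cap\ker(\ldots)$ contains $\Theta H^2_{\mathbb C^n}$, where $\theta$ is a suitable scalar inner function built from $\theta_1,\theta_2$. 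This yields a nonzero invariant subspace of $T_z$ inside the kernel, unless $\Phi_-=0$, in which case $T_\Phi$ is analytic.

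Assume $\Phi_-\neq0$. By hypothesis the kernel $\clk=\ker[T_\Phi^*,T_\Phi]$ is invariant under $T_\Phi$. The key step is to show that $\clk$ is also invariant under $T_z$, so that by Beurling--Lax--Halmos $\clk=\Delta H^2_{\mathbb C^m}$ for an inner $\Delta$. Following Abrahamse, I would compute $T_\Phi(\Theta_2 f)$ and use the coprimeness of $A,B,\Theta_2$ to show that $\clk$ must be all of $H^2_{\mathbb C^n}$. Concretely, for $f\in\clk$ one has $H_{\Phi_+^*}f$ and $H_{\Phi_-^*}f$ of equal norm. Applying this to $T_\Phi^k f$ and using the degree/coprime structure, I would show $\Theta_2$ divides everything in $\clk$ only if $\Theta_2$ is constant, which is impossible when $\Phi_-\neq0$. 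Hence $[T_\Phi^*,T_\Phi]=0$ and $T_\Phi$ is normal.

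The main obstacle is this last step, namely transferring $T_\Phi$-invariance of the kernel into a structural statement that contradicts left coprimeness. In the scalar case one divides freely; in the matrix case noncommutativity obstructs that. My first attempt will exploit the scalar nature of $\Theta_1$ and $\Theta_2$, which commute with everything, to reduce to computing $H_{\Phi_-^*}(\Theta_2 g)=0$ and $H_{\Phi_+^*}(\Theta_2 g)$. I would then apply the left-coprime hypothesis to force $\ker H_{\Phi_+^*}^*H_{\Phi_+^*}\cap\Theta_2H^2=\Theta_2H^2$ only in the normal case.
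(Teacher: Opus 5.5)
There is a genuine gap. Your preliminary reductions are sound, and they match the setup the paper uses for its operator-valued analogue; the statement itself is only quoted from Curto et al.
\begin{itemize}
\item For matrix symbols, hyponormality does force $\Phi$ to be normal.
\item Hence $[T_\Phi^*,T_\Phi]=H_{\Phi_+^*}^*H_{\Phi_+^*}-H_{\Phi_-^*}^*H_{\Phi_-^*}=H_{\Phi_+^*}^*(I-T_{\tilde K}T_{\tilde K}^*)H_{\Phi_+^*}$, with $K$ from Gu--Hendricks--Rutherford.
\item $\ker H_{\Phi_-^*}=\Theta_2H^2_{\mathbb{C}^n}$ by coprimeness of $B$ and $\Theta_2$.
\end{itemize}
The gap is the decisive step: passing from $T_\Phi$-invariance of $\clk=\ker[T_\Phi^*,T_\Phi]$ to $[T_\Phi^*,T_\Phi]=0$. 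You describe this step but do not carry it out.

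You propose to show first that $\clk$ is $T_z$-invariant and then apply Beurling--Lax--Halmos. You give no argument for $T_z$-invariance, and hyponormality plus the coprime factorizations do not supply it. Take $\varphi=\bar z+z^2$, so $\theta_1=z^2$, $A=1$, $\theta_2=z$, $B=1$, and $k=z$ works in Cowen's criterion. The self-commutator is the projection onto $\mathbb{C}z$, and its kernel $\mathbb{C}\oplus z^2H^2$ is not $T_z$-invariant. Under the full hypotheses $\clk$ does turn out to be $T_z$-invariant, but only because the conclusion holds ($\clk$ is then $H^2_{\mathbb{C}^n}$ or $\ker H_{\Phi_+^*}$). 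So it cannot serve as an unproved first step.

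The remaining sentences do not produce $\clk=H^2_{\mathbb{C}^n}$ either. These are the appeal to the equal-norm condition on $T_\Phi^kf$, the ``degree/coprime structure'', and ``$\Theta_2$ divides everything in $\clk$ only if $\Theta_2$ is constant''. You also never use the $K$ you introduced. You never derive the divisibility $\theta_2\mid\theta_1$, and you never say where coprimeness of $A$ and $\Theta_2$ enters.

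The missing idea is to let $K$ carry the argument, as in the paper's proof of its main theorem.
\begin{enumerate}
\item From $\Theta_1H^2_{\mathbb{C}^n}\subseteq\ker H_{\Phi_+^*}\subseteq\ker H_{\Phi_-^*}=\Theta_2H^2_{\mathbb{C}^n}$ you get $\theta_1=\theta_0\theta_2$ and $\Theta_1H^2_{\mathbb{C}^n}\subseteq\clk$.
\item For polynomials $f$, $T_\Phi(\theta_1f)=\theta_1\Phi_+f+\theta_0Bf$. Invariance then puts $P_{\clq_{\Theta_1}}(\theta_0Bf)=\theta_0P_{\clq_{\Theta_2}}(Bf)$ in $\clk$. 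Coprimeness of $B$ and $\Theta_2$ makes these vectors dense in $\theta_0\clq_{\Theta_2}$, so $\Theta_0H^2_{\mathbb{C}^n}\subseteq\clk$.
\item Since $H_{\Phi_+^*}(\theta_0f)=H_{A\Theta_2^*}f$, and coprimeness of $A$ and $\Theta_2$ makes the range of $H_{A\Theta_2^*}$ dense in $\clq_{\widetilde\Theta_2}$, you get $T_{\tilde K}T_{\tilde K}^*=I$ on $\clq_{\widetilde\Theta_2}$.
\item If $\Phi_-\neq0$, then $\theta_2$ is nonconstant. Test on $f_0e_j$ with $f_0\in\clq_{\tilde\theta_2}$ outer and invertible in $H^\infty$. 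This yields $\tilde K\tilde K^*=I$ and $\tilde K^*f_0\in H^2_{\clm_n}$, so $\tilde K^*$ is analytic as well as $\tilde K$.
\item Therefore $\tilde K$ is a constant unitary, $I-T_{\tilde K}T_{\tilde K}^*=0$, and $T_\Phi$ is normal.
\end{enumerate}
It is this rigidity of $K$ on a model space that produces normality, not an identification of $\clk$ through Beurling--Lax--Halmos.
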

		In this article, we extend the above result to Toeplitz operators $T_{\Phi}$, where $\Phi\in L^{\infty}_{\clb(\clh)}(\bt)$ is of the form
	\begin{align}\label{eqn main assumption}
		\Phi=\theta_1\Psi_1^*\text{ and }\Phi^*=\theta_2\Psi_2^*,
	\end{align}
	where $\theta_1,\theta_2\in H^{\infty}(\bd)$ are  inner functions and $\Psi_1,\Psi_2\in H^{\infty}_{\clb(\clh)}(\bd)$ are normal functions. We prove that in addition if $\text{ker}[T^*_{\Phi},T_{\Phi}]$ is invariant under $T_{\Phi}$ and $\theta_2$ is left coprime to $\Psi_1$ and $\Psi_2$, then $T_{\Phi}$ is normal or analytic. 
	
	Another interesting question related to hyponormal Toeplitz operators  is to characterize  symbol $\Phi$ for which $T_{\Phi}$ is hyponormal.
	For scalar valued $\varphi\in L^{\infty}(\bt)$, a complete characterization of hyponormal Toeplitz operators is given by Cowen \cite{Cowen}.
	\begin{theorem}\cite{Cowen}
		Let $\varphi\in L^{\infty}(\bt)$. Then $T_{\varphi}$ is hyponormal if and only if there exists a $k\in H^{\infty}(\bd)$ with $\|k\|_{\infty}\leq 1$ and $\varphi-k\bar{\varphi}\in H^{\infty}(\bd).$
	\end{theorem}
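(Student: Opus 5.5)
The plan is to prove Cowen's criterion, taking the route through the commutant lifting theorem and Hankel operators. First I will write $\varphi=f+\bar g$ with $f,g\in H^2(\bd)$; this is possible because $\varphi\in L^\infty\subset L^2$. I will then use the standard identity
\begin{align*}
[T_\varphi^*,T_\varphi]=H_{\bar f}^*H_{\bar f}-H_{\bar g}^*H_{\bar g},
\end{align*}
where $H_\psi h=(I-P_+)(\psi h)$ is the Hankel operator from $H^2(\bd)$ to $L^2(\bt)\ominus H^2(\bd)$. The identity follows from $T_{\psi_1\psi_2}=T_{\psi_1}T_{\psi_2}+H_{\bar\psi_1}^*H_{\psi_2}$ applied to $T_\varphi^*T_\varphi$ and $T_\varphi T_\varphi^*$: the analytic and coanalytic cross terms cancel, and only the Hankel parts survive. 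Consequently, $T_\varphi$ is hyponormal if and only if $\|H_{\bar g}h\|\le\|H_{\bar f}h\|$ for all $h\in H^2(\bd)$.

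For the \emph{if} direction, suppose $\varphi-k\bar\varphi\in H^\infty(\bd)$ with $\|k\|_\infty\le 1$. The coanalytic part of $\varphi$ is $\bar g$, and the coanalytic part of $k\bar\varphi=k\bar f+kg$ is $(I-P_+)(k\bar f)$. The hypothesis therefore says exactly that $H_{\bar g}=H_{k\bar f}$. Since $k$ is analytic, $H_{k\bar f}h=(I-P_+)(k\,(I-P_+)(\bar f h))$, because $k\cdot P_+(\bar f h)$ is analytic. Multiplying by $k$ and then compressing is a contraction, so $\|H_{\bar g}h\|\le\|H_{\bar f}h\|$, and $T_\varphi$ is hyponormal.

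For the \emph{only if} direction, which is the real content, I will use the inequality $\|H_{\bar g}h\|\le\|H_{\bar f}h\|$ to define a contraction $X$ on $\mathcal{N}=\overline{\text{ran}}\,H_{\bar f}$ by $X H_{\bar f}h=H_{\bar g}h$. After conjugating by the flip $z\mapsto\bar z$ (or equivalently by identifying $L^2\ominus H^2$ with $H^2$ via $\bar z\bar h$), each Hankel operator satisfies the intertwining relation $H_\psi T_z=S^* H_\psi$, where $S^*$ is the compressed shift acting on $L^2\ominus H^2$; this is a backward shift. Hence $\mathcal N$ is invariant under this backward shift, and $X$ commutes with the compression of the backward shift to $\mathcal N$. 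By the commutant lifting theorem (Sz.-Nagy–Foiaş, applied to the coisometric/compressed shift), $X$ extends to an operator on the whole space with the same norm that commutes with the backward shift. Such an operator has the form $P_-M_{k}|$, i.e. a co-analytic Toeplitz operator, with symbol $k\in H^\infty(\bd)$ satisfying $\|k\|_\infty=\|X\|\le1$.

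Unwinding the lifted identity gives $H_{\bar g}=H_{k\bar f}$, which says $\bar g-(I-P_+)(k\bar f)\in H^2$. This is equivalent to $\varphi-k\bar\varphi$ being analytic. The difference is also bounded, since $\varphi$ and $k$ are bounded, so $\varphi-k\bar\varphi\in H^\infty(\bd)$. The main obstacle I expect is bookkeeping in the lifting step: verifying that the intertwining is with the right compressed shift, so that the lifted commutant is a Toeplitz operator with \emph{analytic} symbol $k$ rather than its conjugate. I would check this on monomials $h=z^n$ first. An alternative I would keep in reserve is Sarason's theorem on the commutant of $S(u)$, applied after approximating by rational or inner data.
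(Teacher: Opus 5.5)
Your proof is correct: the Hankel decomposition of the self-commutator, the contraction $X$, the commutant lifting step (including your identification of the lifted operator as $P_-M_k$ with analytic $k$) and the unwinding to $\varphi-k\bar\varphi\in H^\infty(\bd)$ all check out. The paper only cites this statement, but its proof of Theorem~\ref{theorem normal+hyponormal}, which reduces to this statement when $\clh=\mathbb{C}$, takes essentially your route: it writes $[T_\Phi^*,T_\Phi]=H_{\Phi^*}^*H_{\Phi^*}-H_\Phi^*H_\Phi$, gets a contractive factor from Douglas's range-inclusion lemma (your $X$), uses a cited lifting theorem to make that factor a Toeplitz operator with symbol of norm at most one (your commutant-lifting step), and proves the converse from $H_{\Phi^*}^*(I-T_\Theta T_\Theta^*)H_{\Phi^*}\ge 0$, which is your contraction estimate.
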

	The above result is latter extended by Gu et al. \cite{CG} to matrix valued Toeplitz operators and recently studied by Abhinand et al. \cite{Abhinand2025} for Toeplitz operators with operator valued symbol. In \cite{Abhinand2025} authors proved that if $\Phi\in L^{\infty}_{\clb(\clh)}(\bt)$ is hyponormal and there exist $K\in H^{\infty}_{\clb(\clh)}(\bt)$ such that $\|K\|_{\infty}\leq 1 $ and $\Phi-K\Phi^*\in H^{\infty}_{\clb(\clh)}(\bd)$, then $T_{\Phi}$ is hyponormal. In this article, we give a similar characterization for hyponormal Toeplitz operators with  normal operator valued function $\Phi\in L^{\infty}_{\clb(\clh)}(\bt)$. Explicitly saying, we proved that if $\Phi\in L^{\infty}_{\clb(\clh)}(\bt)$ is normal, then $T_{\Phi}$ is hyponormal if and only if there exists a $\Theta\in H^{\infty}_{\clb(\clh)}(\bd)$ with $$\|\Theta\|_{\infty}\leq 1\text{ and }\Phi-\Phi^*\Theta\in H^{\infty}_{\clb(\clh)}(\bd).$$
	
	Now, we mention a few notations which are frequently used in the subsequent sections. Let $\Theta\in H^{\infty}_{\clb(\clh)}(\bd)$. Then $\Theta$ is called an {\it inner function} if $\Theta(z)$ is an isometry for almost all $z\in\bt$ and called a {\it two sided inner function} if $\Theta(z)$ is a unitary for almost all $z\in\bt$. For an inner function $\Theta\in H^{\infty}_{\clb(\clh)}(\bd)$, the {\it model space} $\clq_{\Theta}$ is defined by
	$$\clq_{\Theta}=H^2_{\clh}(\bd)\ominus\Theta H^2_{\clh}(\bd).$$ For $\Phi\in L^{\infty}_{\clb(\clh)}(\bt)$, the {\it Hankel operator} $H_{\Phi}$ is defined by
	$$H_{\Phi}f=(I-P_+)\Phi f\text{ for }f\in H^2_{\clh}(\bd).$$
	For $\Phi,\Psi\in L^{\infty}_{\clb(\clh)}(\bt)$, we will be frequently using the relation
	\begin{align}\label{toeplitz hankel relation}
		T_{\Phi}T_{\Psi}=T_{\Phi\Psi}-H^*_{\Phi^*}H_{\Psi}.
	\end{align}
	We refer to \cite{VV} for more details about Toeplitz and Hankel operators.
	
	The article is divided into two section. In section two we prove our main result and study hyponormal Toeplitz operators for operators valued normal symbol.

	\section{Operator valued hyponormal Toeplitz operators}
	In this section, firstly we characterize hyponormal Toeplitz operators for an operator value normal symbol $\Phi\in L^{\infty}_{\clb(\clh)}(\bt)$. Note that normality of the function $\Phi$ will not imply normality of $T_{\Phi}$, simple examples are scalar inner functions and the corresponding Toeplitz operator. Thus it is interesting to study the class of hyponormal Toeplitz operators with normal operator valued symbol.
	For $\Theta\in H^{\infty}_{\clb(\clh)}(\bd)$, we use the notation  $\widetilde{\Theta}$ for the  $H^{\infty}_{\clb(\clh)}(\bd)$ function, which is defined by $\widetilde{\Theta}(z)=\Theta(\bar{z})^*$ for $z\in\bd$. 
		\begin{theorem}\label{theorem normal+hyponormal}
		Let  $\Phi \in L^\infty_{\clb(\clh)}(\bt)$ be  normal. Then  $T_\Phi$ is a hyponormal Toeplitz operator if and only if there exists a $\Theta \in H^\infty_{\clb(\clh)}(\bd)$ such that $\|\Theta\|_\infty\leq1$ and $\Phi-\Phi^* \widetilde\Theta \in H^\infty_{\clb(\clh)}(\bd).$
	\end{theorem}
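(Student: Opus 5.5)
Following Cowen's argument as extended by Gu–Hendricks–Rutherford \cite{CG}, I plan to reduce hyponormality of $T_\Phi$ to a comparison of Hankel operators and then apply the commutant lifting theorem. Using \eqref{toeplitz hankel relation} with $\Psi=\Phi^*$ and with $\Phi$, $\Phi^*$ interchanged, we get
\begin{align*}
[T_\Phi^*,T_\Phi]=T_{\Phi^*}T_\Phi-T_\Phi T_{\Phi^*}=T_{\Phi^*\Phi-\Phi\Phi^*}+H^*_{\Phi^*}H_{\Phi^*}-H^*_{\Phi}H_{\Phi}.
\end{align*}
Since $\Phi$ is normal, the Toeplitz term vanishes. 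Hence $T_\Phi$ is hyponormal if and only if $\|H_\Phi f\|\le\|H_{\Phi^*}f\|$ for all $f\in H^2_\clh(\bd)$.

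For the ``if'' direction, suppose $\Phi-\Phi^*\widetilde\Theta\in H^\infty_{\clb(\clh)}(\bd)$. Hankel operators vanish on analytic symbols, so $H_\Phi=H_{\Phi^*\widetilde\Theta}$. I would check the identity $H_{\Psi G}=H_\Psi T_G$ for analytic $G$: for $f\in H^2_\clh$, $Gf$ is analytic, so $(I-P_+)\Psi Gf=H_\Psi(Gf)$. This gives $H_\Phi=H_{\Phi^*}T_{\widetilde\Theta}$. Therefore $\|H_\Phi f\|\le\|H_{\Phi^*}\|$-type bounds are not what we need; rather we need a pointwise bound, which the same identity does not give directly. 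The issue is that $\widetilde\Theta$ sits on the right, so we get $\|H_\Phi f\|=\|H_{\Phi^*}(\widetilde\Theta f)\|$, whose right-hand side is not comparable to $\|H_{\Phi^*}f\|$.

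To fix this I would use the flip $J$, defined by $(Jg)(z)=\bar z g(\bar z)$ on $L^2_\clh$. It intertwines $H_\Psi$ with $H_{\breve\Psi}^*$-type operators, and this is exactly why $\widetilde\Theta$ appears in the statement. Concretely, I expect the identity $H_\Psi^*=JH_{\widetilde\Psi}J$-like relations to turn $H_\Phi=H_{\Phi^*}T_{\widetilde\Theta}$ into $H_\Phi^{*}=T_{\Theta}^*H_{\Phi^*}^{*}$ up to the flip, or equivalently into the form $H_\Phi=T^*_{\Theta'}H_{\Phi^*}$ acting on the range side, where $T^*_{\Theta'}$ is a contraction on $(H^2_\clh)^\perp$ commuting with the backward-shift structure. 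Then $\|H_\Phi f\|\le\|\Theta\|_\infty\|H_{\Phi^*}f\|\le\|H_{\Phi^*}f\|$. The first concrete task is to verify, on monomials $z^n x$, the precise formula relating $H_{\Psi G}$ and $H_{G\Psi}$ with $G\in H^\infty$, in the form $(I-P_+)G\,(I-P_+)\Psi f=(I-P_+)G\Psi f$. Getting the multiplication side right (left versus right) is the bookkeeping that determines whether $\Theta$ or $\widetilde\Theta$ appears.

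For the ``only if'' direction, assume $\|H_\Phi f\|\le\|H_{\Phi^*}f\|$. I would define a contraction $X$ on $\overline{\mathrm{ran}}\,H_{\Phi^*}$ by $X H_{\Phi^*}f=H_\Phi f$ and extend it by $0$ on the orthogonal complement in $(H^2_\clh)^\perp$. Hankel operators satisfy $H_\Psi T_z=S H_\Psi$, where $S=(I-P_+)M_z$ restricted to $(H^2_\clh)^\perp$, which is a coisometry/compressed shift. So $X$ intertwines $S$ on $\overline{\mathrm{ran}}\,H_{\Phi^*}$, an $S$-invariant subspace. The operator $S$ is unitarily equivalent, via $J$, to the adjoint of the shift on $H^2_\clh$. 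By the commutant lifting theorem (Sz.-Nagy–Foiaş), $X$ lifts to an operator commuting with the full backward shift with the same norm. Such operators are $T^*_{G}$-type, i.e.\ compressions of multiplication by some $G\in H^\infty_{\clb(\clh)}$ with $\|G\|_\infty\le1$. Unwinding the flip yields $H_\Phi=H_{\Phi^*\widetilde\Theta}$ for a suitable $\Theta$ with $\|\Theta\|_\infty\le1$. Finally, $H_{\Phi-\Phi^*\widetilde\Theta}=0$ forces $\Phi-\Phi^*\widetilde\Theta\in H^\infty_{\clb(\clh)}$.

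I expect the main obstacle to be the lifting step in the operator-valued setting. The intertwining must be set up so that the lifted commutant is given by an operator-valued $H^\infty$ symbol acting on the correct side. This is what distinguishes the appearance of $\Phi^*\widetilde\Theta$ from $\Theta\Phi^*$, since operator values do not commute. I would first establish the lemma that the commutant of the backward shift on $(H^2_\clh)^\perp\cong H^2_\clh$ is $\{T_G^*: G\in H^\infty_{\clb(\clh)}\}$ with norm equality. Then I would carefully verify how $J$ converts this into right multiplication by $\widetilde\Theta$ on Hankel symbols.
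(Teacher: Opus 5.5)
\textbf{The gap.} The step that fails is the one you flag yourself and then try to repair with the flip $J$. It cannot be repaired. Conjugation by $J$ does not change the side on which a symbol multiplies: $JM_GJ=M_{G(\bar z)}$. So it cannot turn $H_{\Phi^*}T_{\widetilde\Theta}$ into a contraction composed with $H_{\Phi^*}$. Moving $\widetilde\Theta$ to the other side requires taking an adjoint, and that changes the Gram operator. From $H_\Phi=H_{\Phi^*}T_{\widetilde\Theta}$ you only get $\|H_\Phi^*g\|\le\|H_{\Phi^*}^*g\|$, i.e.\ $H_\Phi H_\Phi^*\le H_{\Phi^*}H_{\Phi^*}^*$. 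Hyponormality needs the other inequality, $H_\Phi^*H_\Phi\le H_{\Phi^*}^*H_{\Phi^*}$.

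The same problem sits in the last line of your ``only if'' argument. Commutant lifting, done correctly, gives $X=(I-P_+)M_G$ restricted to $\overline{\mathrm{ran}}\,H_{\Phi^*}$, for some $G\in H^\infty_{\clb(\clh)}(\bd)$ with $\|G\|_\infty\le 1$. Your identity $(I-P_+)G(I-P_+)\Psi f=(I-P_+)G\Psi f$ then yields $H_\Phi=H_{G\Phi^*}$, i.e.\ $\Phi-G\Phi^*\in H^\infty_{\clb(\clh)}(\bd)$ with $G$ on the \emph{left}. No unwinding of $J$ converts this into $\Phi^*\widetilde\Theta$. With the multiplier on the left, both of your directions go through with no flip at all. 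With $\widetilde\Theta$ on the right, neither direction holds in general once the values do not commute.

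\textbf{Counterexample.} Let $\clh=\mathbb{C}^4$, let $E_{ij}$ be the matrix units, and set $\Phi=zE_{43}+z^2E_{12}+\bar zE_{24}+\bar z^2E_{31}$.
\begin{enumerate}
\item On $\bt$, $\Phi(z)$ is a permutation matrix with unimodular weights, hence unitary, so $\Phi$ is normal.
\item For $f=\sum_n f_nz^n$ one computes $\|H_\Phi f\|^2=\|H_{\Phi^*}f\|^2=|\langle f_0,e_4\rangle|^2+|\langle f_0,e_1\rangle|^2+|\langle f_1,e_1\rangle|^2$. So $T_\Phi$ is normal, in particular hyponormal.
\item For every $L\in H^\infty_{\clb(\clh)}(\bd)$, the $\bar z^2$-coefficient of $\Phi-\Phi^*L$ is $E_{31}-E_{21}L(0)\neq0$. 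So no admissible $\Theta$ exists, and the ``only if'' direction fails. By contrast, $\Phi-U\Phi^*\in H^\infty_{\clb(\clh)}(\bd)$ for the constant unitary $U$ swapping $e_2$ and $e_3$.
\item Conversely, $\Psi=\Phi^{T}$ is unitary-valued and $\Psi-\Psi^*U\in H^\infty_{\clb(\clh)}(\bd)$, so the stated condition holds with $\Theta=U$. But for $f=ze_3$ one gets $H_{\Psi^*}f=0$ and $H_\Psi f=\bar ze_1$, so $T_\Psi$ is not hyponormal, and the ``if'' direction fails.
\end{enumerate}

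\textbf{The paper's proof.} It makes the same slip. From $H_\Phi=T_\Theta^*H_{\Phi^*}$ it writes $H_\Phi=H_{\Phi^*}T_{\widetilde\Theta}=H_{\Phi^*\widetilde\Theta}$. In the flipped convention in which $T_\Theta^*H_{\Phi^*}$ makes sense, however, $T_\Theta^*H_{\Phi^*}=H_{\widetilde\Theta\Phi^*}$. Its converse uses $H_\Phi=T_\Theta^*H_{\Phi^*}$, which does not follow from the stated hypothesis.

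So your approach proves the corrected statement: $T_\Phi$ is hyponormal iff there is $K\in H^\infty_{\clb(\clh)}(\bd)$ with $\|K\|_\infty\le1$ and $\Phi-K\Phi^*\in H^\infty_{\clb(\clh)}(\bd)$, as in Gu--Hendricks--Rutherford. This reduces to the stated version for scalar symbols.
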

	\begin{proof}
		Let $T_\Phi$ be a hyponormal operator. Then equation \eqref{toeplitz hankel relation} implies that
		\begin{align*}
			T_\Phi^* T_\Phi - T_\Phi T_\Phi^* &=
			T_{\Phi^* \Phi} - H_\Phi^* H_\Phi - T_{\Phi \Phi^*} + H_{\Phi^*}^* H_{\Phi^*} \\
			&=T_{\Phi^* \Phi - \Phi \Phi^*} - H_\Phi^* H_\Phi + H_{\Phi^*}^* H_{\Phi^*}.
		\end{align*}
		Since $\Phi$ is a normal operator and $T_{\Phi}$ is a hyponormal operator,  we have
		\begin{align*}
			T_\Phi^* T_\Phi - T_\Phi T_\Phi^*  =- H_\Phi^* H_\Phi + H_{\Phi^*}^* H_{\Phi^*}\geq 0.
		\end{align*}
		By  \cite[Theorem 1]{GR}, we get that $\text{ran}( H_{\Phi}^*)\subseteq \text{ran}(H_{\Phi^*}^* )$, and \cite[Theorem 7]{SR} implies that there exists a $\Theta \in H^\infty_{\clb(\clh)}(\bd)$ such that  $\|\Theta\|_\infty\leq1$ and
		$H_{\Phi}^* = H_{\Phi^*}^* T_\Theta.$ 
		Thus $H_{\Phi} = T_\Theta^* H_{\Phi^*}$ and \cite[Proposition~4]{SR} implies that
		\begin{align*}
			H_{\Phi} &= H_{\Phi^*} T_{\widetilde{\Theta}}= H_{\Phi^* \widetilde{\Theta}}.
		\end{align*}
		The above equation implies that  $H_{\Phi-\Phi^* \widetilde{\Theta}}=0$ and equivalently $\Phi - \Phi^* \widetilde{\Theta} \in H^\infty_{\clb(\clh)}(\bd)$.
		
		Conversely, assume that $\Phi - \Phi^* \widetilde{\Theta} \in H^\infty_{\clb(\clh)}(\bd).$ From \cite[Proposition 4]{SR}, we have
		\begin{align*}
			T^*_{\Phi}T_{\Phi}-T_{\Phi}T_{\Phi}^*=&H^*_{\Phi^*}H_{\Phi^*}-H^*_{\Phi}H_{\Phi}\\
			=&H^*_{\Phi^*}(I-T_{\Theta}T_{\Theta}^*)H_{\Phi^*}.
		\end{align*}
	Since $\|\Theta\|_{\infty}\leq 1$, we get that $T_{\Phi}$ is hyponormal.
	\end{proof}

Now, our next aim is to give a sufficient condition for a Toeplitz operator to be analytic or normal for operator valued symbol. Before proving our main result, we mention a few results which are used in our main result.
\begin{lemma}\label{containment lemma}
	Let $\theta\in H^{\infty}(\bd)$, $\Psi\in H^{\infty}_{\clb(\clk,\clh)}(\bd)$ be two inner functions with $	\Theta H^2_{\clh}(\bd)\subseteq\Psi H^2_{\clk}(\bd),$ where $\Theta=\theta I_{\clh}$. Then $\Psi^*(\theta I_{\clh})\in H^{\infty}_{\clb(\clh,\clk)}(\bd)$ is a two sided inner function.
\end{lemma}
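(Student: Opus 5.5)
Since $\Theta H^2_{\clh}(\bd)\subseteq\Psi H^2_{\clk}(\bd)$, I will factor $\theta I_{\clh}$ through $\Psi$ and then identify the factor with $\Psi^*\theta$. The steps are as follows.

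\textbf{Step 1 (factorization).} Because $\Psi$ is inner, $T_\Psi$ is an isometry from $H^2_{\clk}(\bd)$ onto the closed subspace $\Psi H^2_{\clk}(\bd)$. Every $f\in H^2_{\clh}(\bd)$ has $\theta f\in\Psi H^2_{\clk}(\bd)$, so there is a unique $g_f\in H^2_{\clk}(\bd)$ with $\theta f=\Psi g_f$. Put $X f=g_f$. Then $X=T_\Psi^*T_{\theta I_{\clh}}=T_{\Psi^*}T_{\theta}$, which is bounded. Since $T_\Psi X=T_\theta$ and both $T_\Psi$ and $T_\theta$ commute with the shift, $X$ intertwines the shifts on $H^2_{\clh}(\bd)$ and $H^2_{\clk}(\bd)$. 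Hence $X=T_\Omega$ for some $\Omega\in H^\infty_{\clb(\clh,\clk)}(\bd)$, with $\theta I_{\clh}=\Psi\Omega$. Multiplying on the left by $\Psi^*$ and using $\Psi^*\Psi=I_{\clk}$ a.e. on $\bt$ gives $\Omega=\Psi^*\theta$ a.e. This shows that $\Psi^*(\theta I_{\clh})\in H^\infty_{\clb(\clh,\clk)}(\bd)$.

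\textbf{Step 2 (isometry).} On $\bt$, $\Omega^*\Omega=\bar\theta\theta\,\Psi\Psi^*=\Psi\Psi^*$. This is a projection, not obviously the identity. Computing instead $\Psi\Omega=\theta I_{\clh}$ gives $\Omega^*\Psi^*\Psi\Omega=\Omega^*\Omega=|\theta|^2I_{\clh}=I_{\clh}$ a.e. So $\Omega(z)$ is an isometry a.e., and consequently $\Psi\Psi^*=I_{\clh}$ a.e. as well.

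\textbf{Step 3 (co-isometry).} Compute $\Omega\Omega^*=\Psi^*\theta\bar\theta\Psi=\Psi^*\Psi=I_{\clk}$ a.e., using that $\Psi$ is inner. Together with Step 2, $\Omega(z)$ is unitary for almost every $z\in\bt$, so $\Omega$ is two sided inner.

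The main obstacle is Step 1: justifying that the intertwiner $X$ is an analytic Toeplitz operator with bounded symbol in the operator-valued (possibly infinite-dimensional) setting. This is the standard fact that operators commuting with the shifts are of the form $T_\Omega$ with $\Omega\in H^\infty$, as in \cite{VV}. A more hands-on alternative is to note that $\Psi^*\theta$ is bounded on $\bt$ with norm at most $1$. For every constant $h\in\clh$ and $n\ge0$, $\Psi^*\theta z^nh=X(z^nh)\in H^2_{\clk}(\bd)$. The negative Fourier coefficients of $\Psi^*\theta$ therefore vanish, so it is the boundary function of an $H^\infty$ function.
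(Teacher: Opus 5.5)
Your proof is correct and follows essentially the paper's route. Both arguments obtain a factorization $\theta I_{\clh}=\Psi\Lambda$ with $\Lambda\in H^\infty_{\clb(\clh,\clk)}(\bd)$, identify $\Lambda=\Psi^*\theta$ using $\Psi^*\Psi=I_{\clk}$, and check $\Lambda^*\Lambda=I_{\clh}$ and $\Lambda\Lambda^*=\Psi^*\Psi=I_{\clk}$ a.e. on $\bt$. The differences are minor: the paper quotes the factorization from its reference, while you derive it (via the commutant of the shift, or via the Fourier coefficients of $\Psi^*\theta$), and you prove that $\Lambda$ is isometric rather than citing it. You also correctly write $I_{\clk}$ in the co-isometry identity, where the paper writes $I_{\clh}$.
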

\begin{proof}
	Let $	\Theta H^2_{\clh}(\bd)\subseteq\Psi H^2_{\clk}(\bd).$ By \cite[Page 727]{VV}, there exists an inner function $\Lambda\in H^{\infty}_{\clb(\clh,\clk)}(\bd)$ such that $\theta I_{\clh}=\Psi\Lambda$. Thus $\Lambda=\theta\Psi^*$ is an inner function. Also, observe  that
	$$\Lambda\Lambda^*=\theta\Psi^*\Psi\theta^*=I_{\clh}.$$
  		This proves our result.
	\end{proof}
\begin{proposition}\label{proposition analytic toeplitz}
	Let $\theta\in H^{\infty}(\bd)$ be an inner function, $\Psi\in H^{\infty}_{\clb(\clh)}(\bd)$ and $\Phi=\theta\Psi^*$. Then $\Phi_+^*\theta\in H^{\infty}_{\clb(\clh)}(\bd).$
\end{proposition}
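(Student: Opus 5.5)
Write $\Phi=\Phi_++\Phi_-^*$, where $\Phi_+=P_+\Phi$ and $\Phi_-^*=(I-P_+)\Phi$. The plan is to compute $\Phi_+^*\theta$ directly from the relation $\Phi=\theta\Psi^*$ and to identify both resulting pieces as analytic.

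Taking adjoints pointwise on $\bt$ gives $\Phi^*=\bar\theta\Psi$. Since $|\theta|=1$ a.e. on $\bt$,
\begin{align*}
\theta\Phi^*=\Psi\in H^{\infty}_{\clb(\clh)}(\bd).
\end{align*}
Substituting $\Phi^*=\Phi_+^*+\Phi_-$ yields
\begin{align*}
\Phi_+^*\theta=\Psi-\theta\Phi_- .
\end{align*}
Here $\Phi_-=((I-P_+)\Phi)^*$. Because $(I-P_+)\Phi$ has only negative Fourier coefficients, its pointwise adjoint has only strictly positive Fourier coefficients. Hence $\Phi_-$ is analytic, and in fact $\Phi_-(0)=0$. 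It follows that $\theta\Phi_-$ is analytic, and therefore so is $\Psi-\theta\Phi_-$, which equals $\Phi_+^*\theta$.

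The main obstacle is boundedness. A priori $\Phi_\pm$ are only $L^2$-type objects (Fourier projections of an $L^\infty$ function), so $\Phi_-$ need not be essentially bounded. I would handle this in one of two ways.

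The first is to take the paper's convention that $\Phi_\pm\in H^2_{\clb(\clh)}$, with membership in $H^\infty$ understood via the identity above.

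The second, better, route is to argue with Fourier coefficients directly. The function $\Phi_+^*\theta=\bar\theta\,\Phi_+$ has negative Fourier coefficients equal to those of $\theta\Phi^*-\theta\Phi_-$, and these vanish. So it lies in the analytic part. For boundedness, observe that $\Phi_+^*\theta=\theta\Phi^*-\theta\Phi_-=\Psi-\theta\Phi_-$ agrees with $P_+(\theta\Phi^*)$ up to an analytic correction. Then compare with $T_{\theta}^{*}$-type expressions, or simply note that $\Phi_+$ inherits boundedness whenever $\Phi_-$ does.

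If only $H^2$ membership is attainable, I would state the conclusion in the class $H^2_{\clb(\clh)}$ and observe that this suffices for the later applications. The core identity $\Phi_+^*\theta=\Psi-\theta\Phi_-$ is the heart of the proof.
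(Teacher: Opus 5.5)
Your identity $\Phi_+^*\theta=\Psi-\theta\Phi_-$ follows from $\theta\Phi^*=\Psi$ and the fact that $\Phi_-=((I-P_+)\Phi)^*$ has only strictly positive frequencies. It correctly shows that $\Phi_+^*\theta$ has no negative Fourier coefficients, and that is exactly what the paper's proof establishes, by a longer route. The paper notes $\theta H^2_{\clh}(\bd)\subseteq\ker H_{\Phi^*}=\ker H_{\Phi_+^*}$ and passes through Beurling--Lax and Lemma~\ref{containment lemma}, whose output is never used. It then kills the coefficients $T_k$, $k<0$, by applying $\Phi_+^*\theta$ to the constants $e_j$.

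The genuine gap is boundedness, which you flag but do not close. Saying $H^\infty$-membership is ``understood via the identity'' is not an argument. The observation $P_+(\theta\Phi^*)=\Psi$ carries no information about $\theta\Phi_-$. And ``$\Phi_+$ is bounded whenever $\Phi_-$ is'' is circular: since $\Phi=\Phi_++\Phi_-^*$ is bounded, these are one and the same unknown. The paper has the identical gap, since from $T_k=0$ for $k<0$ it simply asserts $\sum_{k\ge0}T_kz^k\in H^\infty_{\clb(\clh)}(\bd)$.

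In fact no argument can close it, because the $H^\infty$ conclusion already fails for $\clh=\mathbb{C}$. There $|\overline{\varphi_+}\theta|=|\varphi_+|$ a.e.\ on $\bt$. Since $\varphi=\theta\bar\psi\in L^\infty$ and $(I-P_+)(\bar\theta\psi)=\bar\theta P_{\clq_\theta}\psi$, one checks that $\varphi_+=P_+(\theta\bar\psi)$ is bounded if and only if $P_{\clq_\theta}\psi\in H^\infty(\bd)$.

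Now take $\theta$ to be the interpolating Blaschke product with zeros $z_k=1-2^{-k}$, and $\psi(z)=(1-z)^{-i\pi/\ln 2}$ (principal branch). This $\psi$ is bounded on $\bd$ and satisfies $\psi(z_k)=(-1)^k$. Let $\theta_k=\theta/b_{z_k}$, with $b_{z_k}$ the Blaschke factor at $z_k$. The functions $g_k=\frac{\theta_k}{\theta_k(z_k)}\cdot\frac{1-z_k^2}{1-z_kz}$ form the basis of $\clq_\theta$ biorthogonal to the reproducing kernels $k_{z_j}$, so $P_{\clq_\theta}\psi=\sum_k\psi(z_k)g_k=\sum_k(-1)^kg_k$.

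Evaluate at the real points $r_N$ with $1-r_N=\frac34\,2^{-N}$. Counting sign changes of the real factors $\frac{z_j-x}{1-z_jx}$ shows that every term with $k\le N$ has sign $(-1)^N$ and modulus at least $\frac12\inf_N|\theta(r_N)|>0$. Meanwhile $\sum_{k>N}|g_k(r_N)|$ is bounded uniformly in $N$. Hence $|P_{\clq_\theta}\psi(r_N)|\gtrsim N$, and $\overline{\varphi_+}\theta\notin H^\infty(\bd)$.

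So the statement that can actually be proved is your fallback, $\Phi_+^*\theta\in H^2_{\clb(\clh)}(\bd)$ in the strong sense. This matches the $H^2$ factors $A,B$ in the theorem of Curto et al.\ quoted in the introduction. Your remark that this ``suffices for the later applications'' is, however, unverified. The main theorem treats $\Lambda_1=\Phi_+^*\theta$ as an $H^\infty$ function, for instance in the notion of left coprimeness, which the paper defines only for $H^\infty$ functions.
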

\begin{proof}
		For $f\in H^2_{\clh}(\bd)$, we have $H_{\Phi^*}(\theta f)=0$ and as a result
			\begin{equation*}
			\theta H^2_\clh(\mathbb{D}) \subseteq \text{ker}(H_{\Phi^*})=\text{ker}(H_{\Phi_+^*}).
		\end{equation*}
		Since the kernel of a Hankel operator is an $M_z$ invariant subspace, by \cite[Theorem A 2.3]{VV} there exists a Hilbert space $\clk$ and an inner function $\Psi_1 \in H^\infty_{\clb(\clk,\clh)}(\mathbb{D})$ such that
		\begin{equation}\label{eqm1}
			\theta H^2_\clh(\mathbb{D}) \subseteq \text{ker}(H_{\varphi^*})=\text{ker}(H_{\varphi_+^*})=\Psi_1  H^2_\clk(\mathbb{D}). 
		\end{equation}
		By Lemma \ref{containment lemma}, we  know that $\Psi_1^*\theta$ is a two sided inner function.
		
			Let $\{e_j:j\in \mathbb{N}\}$ be an orthonormal basis for the Hilbert space $\clk$ and $$\Phi_+^*\theta=\underset{k\in\mathbb{Z}}{\sum}T_kz^k,$$ for $T_k\in \clb(\clh)$. From equation \eqref{eqm1}, we know that $\Phi_+^*\theta f\in H^2_{\clh}(\bd)$ for every $f\in H^2_{\clh}(\bd)$, which implies that 
		$  T_k(e_j)=0,$ for  $k<0,$ $j \in \mathbb{N}.$ Thus  $T_k=0, $ for $k<0$ and
		$$\Phi_+^*\theta=\sum_{k\in\mathbb{N}\cup\{0\}}T_kz^k\in H^{\infty}_{\clb(\clh)}(\bd).$$
		This proves our result.
	\end{proof}	
	The following result is an easy observation, but for completness of the article we give a short proof.
		\begin{proposition}\label{vector model space}
			Let $\theta\in H^{\infty}(\bd)$ and $\Psi\in H^{\infty}_{\clb(\clh)}(\bd)$ be two inner functions such that $\theta I_{\clh}$ and $\Psi$ are left coprime. Then
			\begin{enumerate}
				\item $\clq_{\theta I_{\clh}}=\overline{\{P_{\clq_{\theta I_{\clh}}}(\Psi g): g\in\clp_\clh\}},$ where $\clp_\clh$  is the   set of all $\clh$-valued polynomials.
				\item  $fe_j\in\clq_{\theta I_{\clh}}$ for $f\in \clq_{\theta}$ and $j\in\mathbb{N}$, where $\{e_i:i\in\mathbb{N}\}$ is an orthonormal basis for $\clh$.
			\end{enumerate}
		 	
		\end{proposition}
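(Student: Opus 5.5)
For (1), I will argue by orthogonal complements inside $\clq_{\theta I_{\clh}}$. Suppose $h\in\clq_{\theta I_{\clh}}$ is orthogonal to $P_{\clq_{\theta I_{\clh}}}(\Psi g)$ for every $g\in\clp_\clh$. Since $h\in\clq_{\theta I_{\clh}}$, we have $\langle h,\Psi g\rangle=\langle h,P_{\clq_{\theta I_{\clh}}}(\Psi g)\rangle=0$. Polynomials are dense in $H^2_\clh(\bd)$ and $T_\Psi$ is bounded, so $h\perp\Psi H^2_\clh(\bd)$. By definition $h\perp\theta H^2_\clh(\bd)$ as well. Hence
$$h\perp \overline{\Psi H^2_\clh(\bd)+\theta H^2_\clh(\bd)}.$$
This closed subspace is $M_z$-invariant. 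By the Beurling--Lax--Halmos theorem \cite[Theorem A 2.3]{VV} it equals $\Omega H^2_{\clk}(\bd)$ for some inner $\Omega\in H^\infty_{\clb(\clk,\clh)}(\bd)$. Both $\theta H^2_\clh(\bd)$ and $\Psi H^2_\clh(\bd)$ then sit inside $\Omega H^2_\clk(\bd)$, so the factorization result \cite[Page 727]{VV}, used in the same way as in Lemma \ref{containment lemma}, gives $\theta I_\clh=\Omega\Lambda_1$ and $\Psi=\Omega\Lambda_2$ with $\Lambda_i$ in $H^\infty$.

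The key point, and where I expect the real work, is to show that $\Omega$ is a constant unitary. The left coprime hypothesis forbids a non-constant inner common left factor, so $\Omega$ must be constant. The definition in the paper, however, asks for a common factor $\Theta$ acting on $\clh$, whereas $\Omega$ maps $\clk$ into $\clh$. To reconcile this I use Lemma \ref{containment lemma}: since $\theta H^2_\clh\subseteq\Omega H^2_\clk$, the function $\Omega^*\theta$ is two-sided inner. It follows that $\Omega$ is two-sided inner, so $\clk$ can be identified with $\clh$ and the coprimeness hypothesis applies. A constant isometry that is two-sided is unitary, so $\Omega H^2_\clk=H^2_\clh$, and therefore $h\perp H^2_\clh(\bd)$, i.e.\ $h=0$. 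This proves (1).

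For (2), let $f\in\clq_\theta$ and fix $j$. Clearly $fe_j\in H^2_\clh(\bd)$. For any $g\in H^2_\clh(\bd)$, expand $g=\sum_i g_ie_i$ with $g_i=\langle g,e_i\rangle\in H^2(\bd)$. Then
$$\langle fe_j,\theta g\rangle_{H^2_\clh}=\langle f,\theta g_j\rangle_{H^2}=0,$$
because $f\perp\theta H^2(\bd)$. Hence $fe_j\in\clq_{\theta I_\clh}$. This part is routine and does not use the coprimeness hypothesis at all.
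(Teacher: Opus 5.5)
Your proof is correct and follows the paper's strategy. In both, part (1) rests on left coprimeness forcing $\theta H^2_{\clh}(\bd)\vee\Psi H^2_{\clh}(\bd)=H^2_{\clh}(\bd)$, and part (2) is the same one-line inner-product computation.

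Where you differ, it is to your advantage, in three ways:
\begin{enumerate}
\item You bypass the inner--outer factorization $\Psi=\Psi_i\Psi_e$. Your argument does not really use that $\Psi$ is inner, which matters because the proposition is later applied to $\Lambda_2$, which is not known to be inner.
\item You actually prove the span identity via Beurling--Lax--Halmos and Lemma \ref{containment lemma}, including the identification of $\clk$ with $\clh$; the paper only asserts it.
\item Your orthogonal-complement formulation avoids the paper's literal intermediate claim $\clq_{\theta I_{\clh}}\subseteq\overline{\{\Psi g: g\in\clp_\clh\}}$. That claim fails in general: take $\clh=\mathbb{C}$, $\Psi=z$, and $\theta$ a Blaschke factor with zero $a\neq 0$.
\end{enumerate}
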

		\begin{proof}
			   Let $\Psi\in H^{\infty}_{\clb(\clh)}(\bd)$. Then $\Psi=\Psi_i\Psi_e$ (see \cite[Corollary 9]{NK}), where $\Psi_i\in H^{\infty}_{\clb(\tilde{\clh},\clh)}(\bd)$ is an inner function and $\Psi_e\in H^{\infty}_{\clb(\clh,\tilde{\clh})}(\bd)$ is an outer function for a Hilbert space $\tilde{\clh}$.
			 Observe that
			\begin{align*}
				\theta H^2_\clh(\mathbb{D}) \vee \overline{\{\Psi g: g\in\clp_\clh\}}&=\theta_1H^2_\clh(\mathbb{D}) \vee \overline{\{\Psi_i\Psi_e g: g\in\clp_\clh\}}\\
				&=\theta_1H^2_\clh(\mathbb{D}) \vee \Psi_iH^2_{\tilde{\clh}}(\mathbb{D}).  
			\end{align*}
			For the last equality, we refer to \cite[Page 20]{NK}. By our assumption, we know that $\theta$ and $\Psi$ are left coprime, which implies that $\theta$ and $\Psi_i$ are left coprime. Therefore
			\begin{align*}
				\theta H^2_\clh(\mathbb{D}) \vee \overline{\{\Psi g: g\in\clp_\clh \}}= H^2_\clh(\mathbb{D}).
			\end{align*}
			From the above equation, we get that $Q_{\theta I_\clh}\subseteq \overline{\{\Psi g: g\in\clp_\clh \}},$ and consequently $Q_{\theta I_\clh}=\overline{\{P_{\clq_{\theta I_{\clh}}}(\Psi g): g\in\clp_\clh\}}.$ 
			
			Let $f\in\clq_{\theta}$. For $j\in\mathbb{N}$, we have
			\begin{align*}
				\langle f(z)e_j, \theta(z)g(z)\rangle=\langle f(z),\theta(z)\langle e_j,g(z)\rangle\rangle=0\text{ for every }g\in H^2_{\clh}(\bd).
			\end{align*}
			From the above equation, we get that $fe_j\in\clq_{\theta I_{\clh}}$ for $j\in\mathbb{N}$ and $f\in\clq_{\theta}$.
		\end{proof}

Now, we prove our main result.

    \begin{theorem}
    	Let $\theta,\theta_1\in H^{\infty}(\bd)$ be inner functions and $\Psi,\Psi_1\in H^{\infty}_{\clb(\clh)}(\bd)$ be normal functions. Assume that
    	\begin{enumerate}
    		\item $\Phi=\theta\Psi^*$ and $\Phi^*=\theta_1\Psi_1^*$,
    		\item  $\theta_1$ is left coprime to $\Psi$ and $\Psi_1$,
            \item $T_\Phi$ is hyponormal,
            \item $\text{ker}[T_\Phi^*, T_\Phi]$ is invariant under $T_\Phi.$
        \end{enumerate}
        Then $T_\varphi$ is analytic or normal Toeplitz operator.
    \end{theorem}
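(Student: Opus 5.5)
The plan follows Abrahamse's scalar argument, transplanted to the operator-valued setting through Theorem \ref{theorem normal+hyponormal} and the model-space facts in Propositions \ref{proposition analytic toeplitz} and \ref{vector model space}.

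\textbf{Step 1: rewrite the self-commutator.} Since $\Phi=\theta\Psi^*$ with $\Psi$ normal, $\Phi$ is a normal function. Hence the first half of the proof of Theorem \ref{theorem normal+hyponormal} gives
$$[T_\Phi^*,T_\Phi]=H^*_{\Phi^*}H_{\Phi^*}-H^*_{\Phi}H_{\Phi}.$$
It also gives a $\Theta\in H^\infty_{\clb(\clh)}(\bd)$ with $\|\Theta\|_\infty\le 1$ and $H_\Phi=T_\Theta^*H_{\Phi^*}$, so that
$$[T_\Phi^*,T_\Phi]=H^*_{\Phi^*}(I-T_\Theta T_\Theta^*)H_{\Phi^*}.$$
From $\Phi^*=\theta_1\Psi_1^*$ we have $\Phi\theta_1=\Psi_1\in H^\infty_{\clb(\clh)}(\bd)$, so $\theta_1H^2_\clh(\bd)\subseteq\ker H_\Phi$. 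From $\Phi^*\theta=\Psi\in H^\infty_{\clb(\clh)}(\bd)$ we get $\theta H^2_\clh(\bd)\subseteq\ker H_{\Phi^*}$. Proposition \ref{proposition analytic toeplitz} supplies the corresponding statements for $\Phi_+$. This yields a concrete description:
$$\ker[T_\Phi^*,T_\Phi]\supseteq\ker H_{\Phi^*}\supseteq \theta H^2_\clh(\bd).$$

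\textbf{Step 2: shift-invariance of the kernel.} Put $\clm=\ker[T_\Phi^*,T_\Phi]$, which is closed and $T_\Phi$-invariant by hypothesis. I want to show $\clm$ is invariant under $T_z$. For $f\in\clm$ we have $\|H_{\Phi^*}f\|=\|H_\Phi f\|$. Using $T_\Phi T_z=T_zT_\Phi+(\text{rank-type correction through }H_\Phi)$, i.e. relation \eqref{toeplitz hankel relation}, the $T_\Phi$-invariance of $\clm$, and the fact that Hankel operators intertwine $T_z$ with the backward-type compression, I expect to show $zf\in\clm$. This is Abrahamse's lemma. Beurling--Lax--Halmos \cite[Theorem A 2.3]{VV} then gives an inner $\Omega$ with $\clm=\Omega H^2_{\clk}(\bd)$. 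Since $\theta H^2_\clh\subseteq\clm$, Lemma \ref{containment lemma} makes $\Omega^*\theta$ two-sided inner, so $\Omega$ is in effect a divisor of $\theta I_\clh$.

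\textbf{Step 3: use the coprimeness, which I expect to be the main obstacle.} On $\clm$ the hyponormality equality forces $T_\Theta^*H_{\Phi^*}f$ to have the same norm as $H_{\Phi^*}f$. Combined with invariance under $T_\Phi$, this should produce vectors of the form $P_{\clq_{\theta_1 I_\clh}}(\Psi_1 g)$ and $P_{\clq_{\theta_1 I_\clh}}(\Psi g)$ lying in the range where $T_\Theta^*$ acts isometrically. Proposition \ref{vector model space}(1), using that $\theta_1$ is left coprime to $\Psi$ and $\Psi_1$, says these projections span all of $\clq_{\theta_1 I_\clh}$. Proposition \ref{vector model space}(2) lets me pass between scalar $\clq_{\theta_1}$ and $\clq_{\theta_1 I_\clh}$ basis-wise to handle the operator-valued coefficients. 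The goal is the following dichotomy:
\begin{enumerate}
\item either $\clm$ is all of $H^2_\clh(\bd)$, so the self-commutator vanishes and $T_\Phi$ is normal;
\item or the complementary piece of $\clm$ forces $\ker H_\Phi=H^2_\clh(\bd)$, i.e.\ $H_\Phi=0$, so $\Phi\in H^\infty_{\clb(\clh)}(\bd)$ and $T_\Phi$ is analytic.
\end{enumerate}
The delicate point is showing that $\Omega$ cannot be a proper nontrivial divisor. For this I would compare $\Omega$ with $\theta_1$ and invoke left coprimeness to rule out a common nonconstant inner left factor.
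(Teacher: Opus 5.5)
There is a genuine gap. The two steps that carry your outline are not actually argued, and the first one is not the route the paper takes.

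\textbf{Step 2.} Beurling--Lax--Halmos requires $\clm=\ker[T_\Phi^*,T_\Phi]$ to be $T_z$-invariant, but you only say you ``expect'' to show this. It is not a general property of hyponormal Toeplitz operators, even under hypotheses (1)--(3). Take the scalar symbol $\varphi=\bar z+2z+z^2$. Then $\theta=z^2$, $\Psi=1+2z+z^3$, $\theta_1=z$ and $\Psi_1=1+2z^2+z^3$, and $\theta_1$ is coprime to both $\Psi$ and $\Psi_1$ since $\Psi(0)=\Psi_1(0)=1$. One computes $\|H_{\bar\varphi}h\|^2-\|H_\varphi h\|^2=|2h(0)+h'(0)|^2$, so $[T_\varphi^*,T_\varphi]=(2+z)\otimes(2+z)$. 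Its kernel $(2+z)^\perp$ is not $T_z$-invariant, because $\mathbb{C}(2+z)$ is not $T_z^*$-invariant ($T_z^*(2+z)=1$).

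So $T_z$-invariance would have to come from hypothesis (4), and your sketch does not extract it. The relation $T_\Phi T_z-T_zT_\Phi=H_{\bar z}^*H_\Phi$ only says that $T_\Phi(zf)-zT_\Phi f$ is a constant in $\clh$; it gives no way to place $zf$ in $\clm$. A posteriori $\clm$ is $T_z$-invariant, since it equals $H^2_\clh(\bd)$ or $\ker H_{\Phi^*}$. Proving it first is therefore essentially as hard as the theorem, and the paper never uses it.

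\textbf{Step 3.} This step states the desired dichotomy rather than deriving it. Comparing $\Omega$ with $\theta_1$ via coprimeness does not finish the argument either: in the paper, coprimeness only produces inclusions, and the conclusion comes from a separate function-theoretic input. The paper's argument runs as follows.
\begin{enumerate}
\item From $\ker H_{\Phi_+^*}\subseteq\ker H_{\Phi_-^*}$ it gets $\theta=\theta_1\theta_0$. The analytic alternative is simply the case where $\theta_1$ is constant, since then $\Phi_-^*\in H^\infty_{\clb(\clh)}(\bd)$. It does not come from a ``complementary piece'' of $\clm$.
\item For nonconstant $\theta_1$, it applies $T_\Phi$ to $\theta_0\theta_1H^2_\clh(\bd)\subseteq\clm$. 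With $\Lambda_2=\Phi_-^*\theta_1$, this gives $P_{\theta_0\clq_{\theta_1I_\clh}}(\theta_0\Lambda_2f)\in\clm$. Proposition \ref{vector model space}, applied with the coprimeness of $\theta_1$ and $\Lambda_2$ (transferred from the hypothesis on $\Psi,\Psi_1$, not used directly), then gives $\theta_0H^2_\clh(\bd)\subseteq\clm$.
\item Via $[T_\Phi^*,T_\Phi]=H_{\Phi_+^*}^*(I-T_{\tilde K}T_{\tilde K}^*)H_{\Phi_+^*}$ and the coprimeness of $\theta_1$ and $\Lambda_1=\Phi_+^*\theta$, this becomes $\clq_{\tilde\theta_1I_\clh}\subseteq\ker(I-T_{\tilde K}T_{\tilde K}^*)$.
\item An outer function $f_0\in\clq_{\tilde\theta_1}$ that is invertible in $H^\infty(\bd)$ then forces $K$ to be a constant isometry, so the self-commutator vanishes.
\end{enumerate}
Step (iv) is the decisive ingredient, and nothing in your proposal plays its role.
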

    \begin{proof}
    	Let $\Phi=\theta\Psi^*$ and $\Phi^*=\theta_1\Psi_1^*$. 
    	By  Proposition \ref{proposition analytic toeplitz},we know that $\Phi_+^*\theta,\Phi_-^*\theta_1\in H^{\infty}_{\clb(\clh)}(\bd)$.
Consider $\Lambda_1=\Phi_+^*\theta $ and $\Lambda_2=\Phi_-^*\theta_1$. It is easy to see that 
\begin{align*}
	\Phi_+=\theta\Lambda_1^*,\Phi_-=\theta_1\Lambda_2^*.
\end{align*}
We claim that $\theta_1$, $\Lambda_1$ are coprime, as well as $\theta_1$ and $\Lambda_2$ are left coprime. 
On the contrary, assume that there exists an  inner function $\Delta_1\in H^{\infty}_{\clb(\clh)}(\bd)$, which is a common left  divisor of $\theta_1$ and $\Lambda_1$. Then 
        \begin{equation*}
            \Phi =\Phi_++\Phi_-^*=\theta(\Lambda_1^*+\theta^*\theta_1^*\Lambda_2).
        \end{equation*}
           Since $\theta$ is an inner function, we have 
        \begin{equation*}
          \Psi^*=\Lambda_1^*+\theta^*\Lambda_2\theta_1^*,
        \end{equation*} 
         which implies that $\Delta_1$ is a common left divisor of $\theta_1$ and $\Psi$. This is a contradiction to our assumption that $\theta_1$ and $\Psi$ are coprime. Hence $\theta_1$ and $\Lambda_1$ are left coprime. A similar argument for $\Phi^*=\theta_1\Psi_1^*$ implies that $\theta_1$ and $\Lambda_2$ are  coprime.

           Since $\Psi$ is normal, we have
           $$\Phi^* \Phi=\Psi \theta^*\theta \Psi^*=\Psi \Psi^*=\Psi^* \Psi=\theta \Psi^* \Psi \theta^*=\Phi \Phi^*.$$ Thus  $\Phi$ is normal. Since $T_\Phi$ is hyponormal, we have
       \begin{align*}
        [T_\Phi^*, T_\Phi]&= T_{\Phi^*\Phi-\Phi \Phi^*}-H_\Phi^*H_\Phi+H^*_{\Phi^*}H_{\Phi^*}\\
         &=H^*_{\Phi^*}H_{\Phi^*}-H_\Phi^*H_\Phi\\
         &=H_{\Phi_+^*}H_{\Phi_+^*}-H_{\Phi_-^*}^*H_{\Phi_-^*} \geq 0.
      \end{align*}
         From the above equation, we get that $\text{ker}H_{\Phi_+^*}\subseteq \text{ker}H_{\Phi_-^*},$ and as a result
      \begin{equation*}
         \theta H^2_\clh(\mathbb{D})\subseteq \theta_1 H^2_\clh(\mathbb{D}).
      \end{equation*}
      Using the fact that $\theta$ and $\theta_1$ are scalar inner functions along with \cite[Theorem 6]{SR}, we get that  there exists a scalar inner function $\theta_0$ such that $ \theta=\theta_1\theta_0.$ Thus
     \begin{align}\label{equation phi}
        \Phi_+=\theta_1\theta_0\Lambda_1^*,\text{ and }
         \Phi_-=\theta_1\Lambda_2^*.
     \end{align}
         If $\theta_1$ is a constant function, say $c$, then  $\Phi_-^*=\overline{c}\Lambda_2 \in H^\infty_{\clb(\clh)}(\mathbb{D}).$ Which is a contradiction to our assumption that $\Phi_-^*=(I-P_+)\Phi \notin H^\infty_{\clb(\clh)}(\mathbb{D}).$ Thus $\Phi_-=0,$ and as a result $T_\Phi$ is an analytic Toeplitz operator.

 On the other hand, assume that $\theta_1$ is a non-constant inner function. 
         Since $\Phi$ is normal and $T_{\Phi}$ is a hyponormal operator. By Theorem \ref{theorem normal+hyponormal}, there exists $K\in H^\infty_{\clb(\clh)}(\mathbb{D})$ such that $\Phi-\Phi^*K \in H^\infty_{\clb(\clh)}(\mathbb{D}),$ and consequently $\Phi_-^*-\Phi_+^*K \in H^\infty_{\clb(\clh)}(\mathbb{D}).$ Observe that
         \begin{align*}
         	[T_\Phi^*, T_\Phi] &=H_{\Phi_+^*}^*H_{\Phi_+^*}-H_{\Phi_+^*K}^*H_{\Phi_+^*K}\\
         	&=H_{\Phi_+^*}^*H_{\Phi_+^*}-H_{\Phi_+^*}^*T_{\tilde{K}}T_{\tilde{K}}^*H_{\Phi_+^*}\\
         	&=H_{\Phi_+^*}^*(I-T_{\tilde{K}}T_{\tilde{K}}^*)H_{\Phi_+^*}\\
         	&=((I-T_{\tilde{K}}T_{\tilde{K}}^*)^{\frac{1}{2}}H_{\Phi_+^*})^*(I-T_{\tilde{K}}T_{\tilde{K}}^*)^{\frac{1}{2}}H_{\Phi_+^*}.
         \end{align*}
         From the above equation, we get that 
      $$\text{ker}[T_\Phi^*, T_\Phi] \subseteq\text{ker}(I-T_{\tilde{K}}T_{\tilde{K}}^*)^{\frac{1}{2}}H_{\Phi_+^*}.$$
      
      We  claim that $\theta_0H^2_\clh(\mathbb{D})\subseteq \text{ker}[T_\Phi^*, T_\Phi].$ To prove our claim, first observe  that
     \begin{align*}
         [T_\Phi^*, T_\Phi]&=H_{\Phi_+^*}^*H_{\Phi_+^*}-H_{\Phi_-^*}^*H_{\Phi_-^*}\\
         &=H_{\Lambda_1\theta_0^* \theta_1^*}^*H_{\Lambda_1 \theta_0^* \theta_1^*}-H_{\Lambda_2 \theta_1^*}^*H_{\Lambda_2\theta_1^*},
     \end{align*}
     which implies that 
     \begin{equation}\label{eqn kernel cont 1}
        \theta_0\theta_1H^2_\clh(\mathbb{D}) \subseteq \text{ker}[T_\Phi^*, T_\Phi].
     \end{equation}
     By our assumptions, we know that $\text{ker}[T_\Phi^*, T_\Phi]$ is invariant under $T_{\Phi}$. Thus $T_{\Phi}(\theta_0\theta_1 H^2(\bd))\subseteq\text{ker}[T_\Phi^*, T_\Phi]$. For $f\in H^2_{\clh}(\bd)$, we have
      \begin{align}
     	T_\Phi(\theta_1\theta_0f)  =\Phi_+\theta_0 \theta_1f+P_{\theta_0\theta_1H^2_\clh(\mathbb{D})}(\theta_0 \Lambda_2 f)+P_{Q_{\theta_0 \theta_1 I_\clh}}(\theta_0\Lambda_2 f)    \end{align}
    which implies that $P_{Q_{\theta_0 \theta_1 I_\clh}}(\theta_0\Lambda_2 f)\in\text{ker}[T_\Phi^*, T_\Phi]$.  Using the fact that
$     	Q_{\theta_0\theta_1I_\clh}=Q_{\theta_0I_\clh}\oplus \theta_0Q_{\theta_1I_\clh},$
     we have
      \begin{align*}
     	P_{Q_{\theta_0\theta_1 I_\clh}}(\theta_0\Lambda_2 f)=P_{\theta_0 Q_{\theta_1 I_\clh}}(\theta_0\Lambda_2 f), \text{ for }f\in H^2_\clh(\mathbb{D}).
     \end{align*}
    Now, Proposition \ref{vector model space} implies that
       \begin{align}\label{equ kernel containment2}
         \theta_0Q_{\theta_1 I_\clh}=\overline{\{P_{\theta_0Q_{\theta_1 I_\clh}}(\theta_0\Lambda_2g): g\in \clp_\clh\}}\subseteq\text{ker}([T_\Phi^*,T_\Phi]).
     \end{align}
     From equations \eqref{eqn kernel cont 1} and \eqref{equ kernel containment2}, we have 
     \begin{align*}
   \theta_0H^2_\clh(\mathbb{D}) =   \theta_0\theta_1H^2_\clh(\mathbb{D}) \oplus\theta_0Q_{\theta_1 I_\clh} \subseteq \text{ker}[T_\varphi^*, T_\varphi].
     \end{align*}
     This proves our claim.
     
     For $f\in H^2_\clh(\mathbb{D})$, we have
       \begin{align*}
      (I-T_{\tilde{K}}T_{\tilde{K}}^*)^{\frac{1}{2}}H_{\Phi_+^*}\theta_0f
      &=(I-T_{\tilde{K}}T_{\tilde{K}}^*)^{\frac{1}{2}}H_{\Lambda_1 \theta_1^*\theta_0^*}\theta_0f\\ &
      = (I-T_{\tilde{K}}T_{\tilde{K}}^*)^{\frac{1}{2}}H_{\Lambda_1\theta_1^*}f \\
      &=0,
    \end{align*}
      which implies that $\text{ran}(H_{\Lambda_1\theta_1^*})\subseteq \text{ker}(I-T_{\tilde{K}}T_{\tilde{K}}^*).$ We already know that
    \begin{align*}
       \text{ker}(H_{\Lambda_1 \theta^*}^*)=\text{ker}(H_{\tilde{\Lambda}_1\tilde{\theta}_1^*})=\tilde{\theta}_1H^2_\clh(\mathbb{D}).
    \end{align*}
    Thus $
    Q_{\tilde{\theta}_1I_\clh}=\text{ran}(H_{\Lambda_1\theta_1^*})\subseteq ker(I-T_{\tilde{K}}T_{\tilde{K}}^*)$ and
   $T_{\tilde{K}}T_{\tilde{K}}^*f=f$ for $f\in  Q_{\tilde{\theta}_1I_\clh}$. We know that  $\|\tilde{K}^*\|_\infty=\|K\|_\infty\leq1,$ which implies that
     \begin{align*}
     \|T_{\tilde{K}}^*f\|\leq\|\tilde{K}^*f\|\leq\|f\|=\|T_{\tilde{K}}T_{\tilde{K}}^*f\|\leq\|{\tilde{K}}T_{\tilde{K}}^*f\|\leq\|T_{\tilde{K}}^*f\|\leq\|P({\tilde{K}}^*f)\|\leq \|\tilde{K}^*f\|\leq\|f\|,
     \end{align*}
     and consequently $\|P({\tilde{K}}^*f)\|= \|\tilde{K}^*f\|,$ or equivalently $\tilde{K}^*f\in H^2_\clh(\mathbb{D})),$ for every $f \in Q_{\tilde{\theta}_1I_\clh}.$ Thus
     \begin{align}\label{equ K}
      T_{\tilde{K}}T_{\tilde{K}}^*f=\tilde{K}\tilde{K}^*f=f,\text{ for }f\in Q_{\tilde{\theta}_1 I_\clh}.
\end{align}
We claim that $K(z)$ is a constant inner function, that is $K(z)=A_0$ for $z\in\bd$ and for some $A_0\in\clb(\clh)$. Assuming our claim we have
 \begin{align*}
	[T_\Phi^*, T_\Phi]&=H_{\Phi_+^*}^*(I-T_{\tilde{K}}T_{\tilde{K}}^*)H_{\Phi_+^*}\\
	&=H_{\Phi_+^*}^*(I-A_0^*A_0)H_{\Phi_+^*}\\
	&=0.
\end{align*}
Hence $T_\varphi$ is a normal operator. This proves our result.

    To prove our claim, we write $K(z)=\underset{k=0}{\overset{\infty}{\sum}}A_kz^k,$ where $A_k\in \clb(\clh)$ for  $k\in\mathbb{N}\cup\{0\}.$ By \cite[Lemma 3.4]{RE}, we can choose an outer function $f_0\in Q_{\tilde{\theta_1}},$ which is invertible in $H^\infty(\mathbb{D}).$ Let $\{e_j\}_{j\in\mathbb{N}}$ be an orthonormal basis for $\clh$. By Proposition \ref{vector model space}, we know that $f_0e_j\in\clq_{\tilde{\theta}_1 I_{\clh}}$ and consequently equation \eqref{equ K} implies that
    \begin{align*}
      \left( \sum_{k=0}^{\infty}A_k^*z^k\right) \left(\sum_{m=0}^{\infty}A_m\overline{z}^m\right)f_0e_j=f_0e_j\text{ for }j \in \mathbb{N} ,
    \end{align*}
  which is equivalent to say that
  \begin{align*}
     \left(\sum_{k=0}^{\infty} A_k^*A_k(e_j)+\sum_{n=1}^{\infty}\sum_{l=0}^{\infty}A_{n+l}^*A_l(e_j)z^n+\sum_{m=1}^{\infty}\sum_{p=0}^{\infty}A_p^*A_{p+m}(e_j)\overline{z}^m-e_j\right)f_0=0.
  \end{align*}
   Since $f_0$ is invertible in $H^\infty(\mathbb{D}),$ the above equation implies that
   $$ \sum_{k=0}^{\infty} A_k^*A_k(e_j)=e_j, ~\sum_{k=0}^{\infty}A_{n+k}^*A_k(e_j)=0, \text{ and } \sum_{k=0}^{\infty}A_{k}^*A_{k+n}(e_j)=0,\text{ for }j,n\in \mathbb{N}.$$
   Thus
  \begin{align*}
      \sum_{k=0}^{\infty} A_k^*A_k=I_\clh,~\sum_{k=0}^{\infty} A_{k+n}^*A_k=0,~\sum_{k=0}^{\infty} A_k^*A_{k+n}=0, \text{ for } n\in \mathbb{N},
  \end{align*}
 and $\tilde{K}(z)\tilde{K}^*(z)=I_\clh$ for $z\in\bd$. 
 
 Since $K(z)=\underset{k=0}{\overset{\infty}{\sum}}A_kz^k\in H_{\clb(\clh)}^\infty(\mathbb{D}),$ we have $K(z)e_j=\underset{k=0}{\overset{\infty}{\sum}}A_k(e_j)z^k\in H_{\clh}^2(\mathbb{D}),$ which implies that $\underset{k=0}{\overset{\infty}{\sum}}\|A_k(e_j)\|^2_\clh$ is finite  for $j\in \mathbb{N}.$ Recall that
 \begin{align*}
 	\underset{k=0}{\overset{\infty}{\sum}}|\langle A_k(e_j) , e_i\rangle|^2 \leq \underset{k=0}{\overset{\infty}{\sum}}\|A_k(e_j)\|^2,
 \end{align*}
and as a result we have
 \begin{align}\label{eqn kij}
 	K_{ij}(z)=\sum_{k=0}^{\infty}|\langle A_k(e_j) , e_i\rangle|z^k\in H^2(\mathbb{D}), \text{ for }i,j \in \mathbb{N}.
 \end{align}
 
We know that
 \begin{align*}
 	Q_{\tilde{\theta}_1I_\clh}\subseteq \text{ker}(I-T_{\tilde{K}}T_{\tilde{K}}^*)=\text{ker}(H_{K(\overline{z})}^*H_{K(\overline{z})})=\text{ker}(H_{K(\overline{z})}).
 \end{align*}
As $f_0e_j\in Q_{\tilde{\theta_1}I_\clh}\subseteq \text{ker}(H_{K(\overline{z})}),$ we have $K(\overline{z})f_0e_j\in  H^2_\clh(\mathbb{D}),$ for  $j\in \mathbb{N}.$
 Thus
 \begin{align*}
 	K_{ij}(\overline{z})f_0=\sum_{k=0}^{\infty}|\langle A_k(e_j) , e_i\rangle|\overline{z}^kf\in H^2(\mathbb{D})\text{ for }i,j \in \mathbb{N}.
 \end{align*}
 Since $f_0$ is invertible in $H^\infty(\mathbb{D}),$ we have 
  \begin{align}\label{eqn kij tilda}
      K_{ij}(\overline{z})=\sum_{k=0}^{\infty}|\langle A_k(e_j) , e_i\rangle|\overline{z}^k\in H^2(\mathbb{D}),\text{ for } i,j \in \mathbb{N}.
 \end{align}
     From equations \eqref{eqn kij} and \eqref{eqn kij tilda}, we get that $K_{ij}$ is a constant function for every $i,j\in\mathbb{N}.$ Hence $K(z)$ is a constant function. This proves our claim and completes the proof.
    \end{proof}
    	In the following example, we illustate that the conditions in above theorem are not necessary for the Toeplitz operator to be analytic or normal.
    \begin{example}
    	Let  $S\in\clb(\ell^2(\mathbb{N}))$ be the shift operator defined by
    	$$S(x_1,x_2,\ldots)=(0,x_1,x_2,\ldots).$$
    	Then
    	\(
    	\Theta(z) =zS
    	\in H^\infty_{\clb(l^2(\mathbb{N}))}    (\mathbb{D})\), and
    	\begin{align*}
    		T_\Theta^*T_\Theta-T_\Theta T_\Theta^*=P_{\ker(S^*)}\geq0.
    	\end{align*}
    	Thus $T_{\Theta}$ is a hyponormal operator.
Observe that $\text{ker}[T^*_{\Theta},T_{\Theta}]$ is not invariant under $T_{\Theta}$, but $T_{\Theta}$ is an analytic Toeplitz operator.
    \end{example}

\end{document}